\definecolor{webgreen}{rgb}{0,0,1}%{1,0.0,.6}
\definecolor{recrown}{rgb}{1,.2,.6}
\begin{document}
\newtheorem{theorem}{Theorem}
\newtheorem{corollary}[theorem]{Corollary}
\newtheorem{lemma}[theorem]{Lemma}
\theoremstyle{example}
\newtheorem*{example}{Examples}
\newtheorem{conjecture}[theorem]{Conjecture}
\newtheorem{thmx}{\bf Theorem}
\renewcommand{\thethmx}{\text{\Alph{thmx}}}% "letter-numbered" theorems
\newtheorem{lemmax}{Lemma}
\renewcommand{\thelemmax}{\text{\Alph{lemmax}}}% "
%\leftmargin=.5in
%\rightmargin=0.5in
%\textwidth=6truein
%\textheight=11.6truein
\hoffset=-0cm
%\voffset=+-2cm
\theoremstyle{definition}
\newtheorem*{definition}{Definition}
\theoremstyle{remark}
\newtheorem*{remark}{\bf Remark}
\title{\bf A New Class of Irreducible Polynomials}
\author{Jitender Singh$^\dagger$}
\address{$~^\dagger$ Department of Mathematics, Guru Nanak Dev University, Amritsar-143005, India}
\author{Sanjeev Kumar$^{\ddagger,*}$}
\address{$~^\ddagger$ Department of Mathematics, SGGS College, Sector-26, Chandigarh-160019, India}
%\markright{}
\subjclass[2010]{Primary 12E05; 11C08}
\date{}
\maketitle
\parindent=0cm
\footnotetext[2]{sonumaths@gmail.com}
\footnotetext[3]{$^{,*}$Corresponding author: sanjeev\_kumar\_19@yahoo.co.in}
%\footnotetext[]{\subjclass[2010]{Primary 12E05; 11C08}}
\begin{abstract}
In this article, we propose a few sufficient conditions on polynomials having integer coefficients all of whose zeros lie outside a closed disc centered at the origin in the complex plane and deduce the irreducibility over the ring of integers. %Elementary divisibility theory for integers is devised for the purpose.%to obtain these results and the cogent techniques involved in the proofs are of independent interest as well.
\end{abstract}

\section{Introduction}
Testing polynomials for irreducibility over a given domain is an arduous task. Of particular interest are the polynomials having integer coefficients for which some well--known classical irreducibility criteria due to Sch\"{o}nemann, Eisenstein, and Dumas exist (see \cite{S,E,D} and for an insightful historical account of Sch\"{o}nemann and Eisenstein criteria, see \cite{Cox}). Recently, the elegant criteria established in \cite{Mu,G} turn out to be extremely significant keeping in view their intimate connection with prime numbers. Moreover, the notion of locating the zeros of the given polynomial being tested for irreducibility is quite informative (see \cite{P}). In this regard, one can infer that if for each zero $\zeta$ of  $g\in \mathbb Z[x]$,  $|\zeta|\leq r$ holds for some  $r>0$, then each zero $\theta$ of $f=g(x-c)$ is given  by $\theta=\zeta+c$ which on applying the triangle inequality yields  $|\theta|>1$ for  any integer  $c$ whose absolute value exceeds $r+2$. Also, the translational invariance of irreducibility of polynomials in the ring $\Bbb{Z}[x]$ ensures the irreducibility of $g$ vis--\'a--vis from that of $f$. Proceeding in this manner, one can frame the following irreducibility criterion from that of the one given in \cite[Theorem 1]{G}.
\begin{thmx}\label{th:0}
 Let  $f\in \mathbb{Z}[x]$ be such that each zero $\theta$ of $f$ satisfies $|\theta|>d$. If $f(0)=\pm pd$ for some positive integer $d$ and prime $p\nmid d$, then $f$ is irreducible in $\mathbb{Z}[x]$.
\end{thmx}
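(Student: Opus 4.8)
The plan is to argue by contradiction: supposing $f$ factors nontrivially, I derive a contradiction by playing the size of the zeros of $f$ against the arithmetic of $f(0)=\pm pd$. As is standard for criteria of this type I take $f$ to be primitive (equivalently, I read irreducibility over $\mathbb Q$), so that by Gauss's lemma a nontrivial factorization of $f$ in $\mathbb Z[x]$ would take the form $f=gh$ with $g,h\in\mathbb Z[x]$ and $\deg g\ge 1$, $\deg h\ge 1$; I assume such $g,h$ exist.

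The only computation needed is an elementary lower bound for the constant term of a polynomial all of whose zeros are large. If $h(x)=b_mx^m+\cdots+b_0\in\mathbb Z[x]$ has $m=\deg h\ge 1$ and every complex zero $\theta$ of $h$ satisfies $|\theta|>d$, then, writing $h(x)=b_m\prod_{i=1}^m(x-\theta_i)$, one gets $|h(0)|=|b_m|\prod_{i=1}^m|\theta_i|>|b_m|\,d^m\ge d^m\ge d$, using $|b_m|\ge 1$, $m\ge 1$, and that $d$ is a positive integer. I want to stress that this inequality $|h(0)|>d$ is \emph{strict}; this is precisely where the strict hypothesis $|\theta|>d$ (as opposed to $|\theta|\ge d$) is used, and it is really the heart of the argument, since it is exactly what forbids the otherwise harmless possibility $|h(0)|=d$. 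Because the zeros of $g$ and of $h$ are among those of $f$, this estimate applies to both factors, giving $|g(0)|>d$ and $|h(0)|>d$; in particular $g(0)$ and $h(0)$ are nonzero integers.

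To conclude, combine these bounds with $f(0)=g(0)h(0)=\pm pd$, which gives $|g(0)|\cdot|h(0)|=pd$. Since $p$ is prime and $p\nmid d$, the $p$-adic valuation of $pd$ equals $1$, so $p$ divides exactly one of $|g(0)|,|h(0)|$; say $p\nmid|h(0)|$. Then $|h(0)|$ is coprime to $p$ and divides $pd$, hence divides $d$, so $|h(0)|\le d$ --- contradicting $|h(0)|>d$. Thus no such factorization exists and $f$ is irreducible. I do not foresee a genuine conceptual obstacle; the only point demanding care is the bookkeeping about the content of $f$, i.e.\ whether ``irreducible'' is meant over $\mathbb Z$ or over $\mathbb Q$, since without primitivity the literal $\mathbb Z[x]$ statement can fail (for example $f=2x^2+10$ with $d=2$ and $p=5$ satisfies every hypothesis yet equals $2(x^2+5)$). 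Accordingly one should either assume $f$ primitive or phrase the conclusion as the impossibility of writing $f$ as a product of two polynomials of positive degree --- which is exactly what the above establishes.
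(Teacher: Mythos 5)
Your proof is correct and follows essentially the same route as the paper: suppose $f=f_1f_2$ with both factors of positive degree, note that $p$ divides exactly one constant term so the other one divides $d$ and hence has absolute value at most $d$, while the hypothesis $|\theta|>d$ forces every positive-degree factor to have constant term of absolute value strictly greater than $d$. Your side remark on primitivity (e.g.\ $f=2x^2+10$) is a fair observation about the literal wording of "irreducible in $\mathbb{Z}[x]$", but the paper's argument, like yours, is really ruling out factorizations into two non-constant polynomials, so no new idea is involved.
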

\begin{proof}
   If possible, let $f(x)=f_1(x)f_2(x)$, where $f_1$ and $f_2$ are non--constant polynomials in $\mathbb{Z}[x]$. By hypothesis on $f$, $f(0)=f_1(0)f_2(0)=\pm pd$ which shows that $p$ divides exactly one of the factors $f_1(0)$ or $f_2(0)$. Assume without loss of generality that $p\mid f_2(0)$. Then $|f_1(0)|\leq d$. On the other hand if $c\neq 0$ is the leading coefficient of $f_1$, then we may write
   \begin{equation}\label{c1}
    f_1(x)=c\prod_{\theta}(x-\theta),
   \end{equation}
    where the product runs over all zeros of $f_1$. By the hypothesis on zeros of $f$ we must have from \eqref{c1} that $|f_1(0)|=|c|\prod_\theta |\theta|>|c| d^{\deg{f_1}}\geq d$, a contradiction.
\end{proof}

In Theorem \ref{th:0}, the primality of $|f(0)|/d$ is necessary to deduce the irreducibility. In an attempt to weaken the hypothesis, we confront the following natural question:
 \emph{Given $|\theta|>d$ for each zero $\theta$ of $f$, is it still possible to recover the irreducibility of $f$ if instead $|f(0)|/d$ is a prime power$?$}
Nevertheless, under certain mild conditions on the coefficients of $f$, we show that the answer to the above question is in the affirmative.

Recall that a polynomial $f$ having integer coefficients is primitive if the greatest common divisor of all its coefficients is 1.
Our main results are the following:
\begin{theorem}\label{th:1}
   Let $f=a_0+ a_{1}x+\cdots+a_n x^n\in \Bbb{Z}[x]$ be a primitive polynomial such that each zero $\theta$ of $f$ satisfies $|\theta|>d$, where $a_0=\pm p^k d$ for some positive integers $k$ and $d$, and a prime $p\nmid d$. If $j\in\{1,\ldots,n\}$ is such that $\gcd(k,j)=1$,  $p^k\mid \gcd(a_0,a_1,\ldots,a_{j-1})$ and for $k>1$,  $p\nmid a_{j}$, then $f$ is irreducible in $\Bbb{Z}[x]$.
\end{theorem}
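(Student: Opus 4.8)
The plan is to argue by contradiction. Assume $f=gh$ with $g,h\in\mathbb{Z}[x]$ non-constant; since $f$ is primitive, every non-trivial factorisation has this shape (a constant factor of a primitive polynomial is $\pm1$), and by Gauss's lemma $g,h$ are themselves primitive. Write $g(x)=b_0+\cdots+b_sx^s$ and $h(x)=c_0+\cdots+c_tx^t$ with $s=\deg g\ge1$, $t=\deg h\ge1$. The decisive quantities are the $p$-adic valuations $v_p(b_0)$ and $v_p(c_0)$: from $b_0c_0=a_0=\pm p^kd$ and $p\nmid d$ we get $v_p(b_0)+v_p(c_0)=k$, with both summands non-negative. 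The heart of the proof is to show that one of these valuations is $0$, i.e.\ that the entire prime power $p^k$ dividing $a_0$ is concentrated in a single one of $b_0,c_0$.

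I would establish this through the $p$-adic Newton polygon of $f$. From $v_p(a_0)=k$, $v_p(a_i)\ge k$ for $1\le i\le j-1$ (the hypothesis $p^k\mid\gcd(a_0,\ldots,a_{j-1})$), $v_p(a_j)=0$ (the hypothesis $p\nmid a_j$ when $k>1$), and $v_p(a_i)\ge0$ for $i>j$, one checks that the leftmost edge of the Newton polygon of $f$ joins $(0,k)$ to $(j,0)$ with slope $-k/j$ and that $(j,0)$ is a vertex; since $\gcd(k,j)=1$, this edge has no lattice point in its interior. Under the factorisation $f=gh$ the multiset of slopes of the Newton polygon, each counted with its horizontal length, is the union of the corresponding data for $g$ and $h$; hence the slope-$(-k/j)$ part of $\mathrm{NP}(f)$, of horizontal length $j$, is the concatenation of the slope-$(-k/j)$ edges of $\mathrm{NP}(g)$ and $\mathrm{NP}(h)$. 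Because $\gcd(k,j)=1$, each of those edges has horizontal length divisible by $j$; being non-negative multiples of $j$ summing to $j$, one of them has length $j$ and the other length $0$. Say $\mathrm{NP}(g)$ carries the length-$j$ edge of slope $-k/j$; as $-k/j$ is the least slope occurring anywhere, this is the leftmost edge of $\mathrm{NP}(g)$, so it runs from $(0,v_p(b_0))$ to $(j,v_p(b_0)-k)$, forcing $v_p(b_0)\ge k$ and therefore $v_p(b_0)=k$, $v_p(c_0)=0$. (When $k=1$ no Newton polygon is needed: $v_p(b_0)+v_p(c_0)=1$ already forces one of them to be $0$.)

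With $v_p(c_0)=0$ and $p^k\mid b_0$, the identity $b_0c_0=\pm p^kd$ gives $c_0\mid d$ in $\mathbb{Z}$, hence $|c_0|\le d$. Now the hypothesis on the zeros enters: every zero of $h$ is a zero of $f$, so writing $h(x)=c_t\prod_{i=1}^{t}(x-\rho_i)$ with $|\rho_i|>d$ and $|c_t|\ge1$ yields $|c_0|=|h(0)|=|c_t|\prod_{i=1}^{t}|\rho_i|>d^{\,t}\ge d$, since $t=\deg h\ge1$. This contradicts $|c_0|\le d$, and the contradiction finishes the proof.

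I expect the middle step to be the main obstacle: forcing the power $p^k$ into a single factor's constant term. Tracking the $p$-adic valuations of the coefficients of $g$ and $h$ directly stalls, because beyond the first index at which a factor becomes a $p$-adic unit the valuations are uncontrolled; it is precisely the coprimality $\gcd(k,j)=1$, transmitted through the absence of interior lattice points on the leading Newton-polygon edge, that prevents a genuine splitting of $p^k$. The remaining ingredients — Gauss's lemma to keep both factors non-constant, the product behaviour of Newton polygons (essentially Dumas's observation), and the elementary estimate $|h(0)|>d^{\deg h}$ coming from the location of the zeros — are routine.
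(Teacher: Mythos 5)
Your proof is correct, but at the decisive step it takes a genuinely different route from the paper's. Both arguments reduce to controlling how $p^k$ splits between the constant terms $b_0, c_0$ of the two putative factors and then contradicting the size information coming from the zeros. The paper gets $|b_0|>d$ and $|c_0|>d$ from the zero condition, kills the case $p\nmid c_0$ exactly as you do (then $p^k\mid b_0$ forces $|c_0|\le d$), and handles the remaining case $p\mid b_0$, $p\mid c_0$ by its Lemma 1: an elementary, self-contained induction on the $p$-adic valuations of the $b_i$ and $c_i$ (split into the cases $\ell<k-\ell$ and $\ell=k-\ell$) showing that $\gcd(k,j)=1$, $p^k\mid\gcd(a_0,\ldots,a_{j-1})$ and $p^{k+1}\nmid a_0$ would force $p\mid a_j$, contradicting the hypothesis for $k>1$. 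You instead invoke Dumas's theorem on Newton polygons of products: the leftmost edge of the polygon of $f$ runs from $(0,k)$ to $(j,0)$ with slope $-k/j$, and since edge endpoints are lattice points and $\gcd(k,j)=1$, the whole length-$j$ edge must sit in a single factor, giving $v_p(b_0)=k$ and $v_p(c_0)=0$ outright; your endgame ($c_0\mid d$, so $|c_0|\le d$, against $|c_0|=|h(0)|>d^{\deg h}\ge d$) matches the paper's use of the zero location, and your separate, trivial treatment of $k=1$ (where $p\nmid a_j$ is not assumed) is exactly the care needed. What each approach buys: yours is shorter and pins down the exact valuation of $c_0$ rather than merely reaching a contradiction, but it uses Dumas's product theorem as a black box — precisely the machinery the paper is deliberately avoiding, since its Lemma 1 is an elementary-divisibility substitute from which the authors even rederive the Dumas-type Theorem \ref{th:3}; their route is thus self-contained where yours relies on the cited classical result.
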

\begin{theorem}\label{th:2}
    Let $f=a_0+ a_{1}x+\cdots+a_n x^n\in \Bbb{Z}[x]$ be a primitive polynomial such that each zero $\theta$ of $f$ satisfies $|\theta|>d$, where $a_n=\pm p^k d$ for some positive integer $k$ and  $d$, and a prime $p\nmid d$. Let $j\in \{1,\ldots, n\}$ be such that $\gcd(k,j)=1$, $p^k\mid \gcd(a_{n-j+1},a_{n-j+2},\ldots,a_{n})$ and for $k>1$,  $p\nmid a_{n-j}$. If $|a_0/q|\leq |a_n|$ where $q$ is the smallest prime divisor of $a_0$, then $f$ is irreducible in $\Bbb{Z}[x]$.
\end{theorem}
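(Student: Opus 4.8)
The plan is to assume, for contradiction, a non-trivial factorization $f=f_1f_2$ in $\mathbb{Z}[x]$ and to use the hypotheses on the top coefficients $a_{n-j},\dots,a_n$ exactly as Theorem~\ref{th:1} uses the bottom coefficients $a_0,\dots,a_j$, only now at the leading end; the extra condition $|a_0/q|\le|a_n|$ will enter only at the last step. Since $f$ is primitive, a non-trivial factorization has both $f_1,f_2$ non-constant; I write $c_1,c_2$ for their leading coefficients, so that $c_1c_2=a_n=\pm p^kd$ and $f_1(0)f_2(0)=a_0$. Note at the outset that $a_0\neq0$ (otherwise $0$ would be a zero of $f$), and in fact $|a_0|=|a_n|\prod_\theta|\theta|>|a_n|\,d^n\ge|a_n|\ge2$, so the smallest prime divisor $q$ of $a_0$ is well defined.

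The first step is to show that, after possibly swapping $f_1$ and $f_2$, one has $p\nmid c_2$ (equivalently $p^k\mid c_1$). If $k=1$ this is immediate, since $v_p(a_n)=1$ forces $p$ to divide exactly one of $c_1,c_2$ (here $v_p$ denotes the $p$-adic valuation). If $k>1$ I would run the Newton polygon argument underlying Theorem~\ref{th:1}, but at the leading end --- equivalently, applied to the reciprocal polynomial $x^nf(1/x)$, whose coefficient sequence is $a_n,a_{n-1},\dots,a_0$. A short check using $p^k\mid\gcd(a_{n-j+1},\dots,a_n)$, $v_p(a_n)=k$ and $p\nmid a_{n-j}$ shows that the rightmost edge of the Newton polygon of $f$ at $p$ joins $(n-j,0)$ to $(n,k)$, so it has slope $k/j$ and horizontal length $j$, and because $\gcd(k,j)=1$ it has no interior lattice point. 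By Dumas' theorem the Newton polygon of $f$ is the concatenation of those of $f_1$ and $f_2$; since any sub-edge of slope $k/j$ with integer endpoints has horizontal length divisible by $j$, this length-$j$ edge cannot be split and lies wholly in one factor, say $f_1$, necessarily as its rightmost edge. Comparing heights at the two endpoints of that edge then forces $v_p(c_1)=k$, and hence $v_p(c_2)=0$.

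Granting $p\nmid c_2$: since $c_2\mid a_n=\pm p^kd$, we get $|c_2|\mid d$, so $1\le|c_2|\le d$ and $|c_1|=p^kd/|c_2|\ge p^k$. Now I would bring in the zeros. Writing $f_2=c_2\prod_i(x-\theta_i)$, where the $\theta_i$ (there are $\deg f_2\ge1$ of them) are zeros of $f$, the hypothesis $|\theta_i|>d$ gives
\[
|f_2(0)|=|c_2|\prod_i|\theta_i|>|c_2|\,d^{\deg f_2}\ge1 ,
\]
so $|f_2(0)|\ge2$; hence $f_2(0)$ has a prime divisor, which divides $a_0=f_1(0)f_2(0)$ and is therefore at least $q$, whence $|f_2(0)|\ge q$. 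The same estimate applied to $f_1$ gives $|f_1(0)|>|c_1|\,d^{\deg f_1}\ge|c_1|\,d$. Multiplying the two lower bounds,
\[
|a_0|=|f_1(0)|\,|f_2(0)|>|c_1|\,d\cdot q\ge p^kd\,q=|a_n|\,q ,
\]
that is $|a_0/q|>|a_n|$, contradicting the hypothesis. Therefore no such factorization exists and $f$ is irreducible in $\mathbb{Z}[x]$.

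I expect the only genuine difficulty to be the case $k>1$ of the first step: one must correctly locate the Newton polygon edge at the leading end (its slope, horizontal length, and the absence of interior lattice points coming from $\gcd(k,j)=1$), and then invoke Dumas' theorem --- or re-prove the special case needed --- to conclude that such an edge cannot be distributed between $f_1$ and $f_2$. Once the clean splitting $v_p(c_1)=k$, $v_p(c_2)=0$ is in hand, everything else is routine estimation with absolute values of integers, the role of the condition $|a_0/q|\le|a_n|$ being precisely to accommodate small values of $d$ (in particular $d=1$) where the bare inequality $|a_0|>|a_n|\,d^n$ would not suffice.
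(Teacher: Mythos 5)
Your proposal is correct, but it reaches the key divisibility fact by a genuinely different route than the paper. The paper's proof splits into two cases on the leading coefficients $b_m,c_{n-m}$ of the putative factors: if $p$ fails to divide one of them, then the other absorbs all of $p^k$, and the zero bound $|\theta|>d$ together with $|a_0/q|\le |a_n|$ yields exactly the contradiction $|a_0|>q\,|a_n|$ that you obtain (your final estimation, including the observation that the relevant constant term is $\ge q$ because its prime divisors divide $a_0$, coincides with the paper's computation); if instead $p$ divides both leading coefficients, then $k\ge 2$ and the paper invokes its Lemma~\ref{L1}, applied to the reversed coefficient sequence, to conclude $p\mid a_{n-j}$, contradicting the hypothesis. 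You instead exclude the ``both divisible'' configuration up front: you locate the rightmost Newton polygon edge of $f$ at $p$, from $(n-j,0)$ to $(n,k)$, note that $\gcd(k,j)=1$ leaves it without interior lattice points, and apply Dumas' theorem to force $v_p=k$ on one factor's leading coefficient and $v_p=0$ on the other's (with the easy separate treatment of $k=1$, where $p\nmid a_{n-j}$ is neither available nor needed). That argument is sound as stated. The trade-off is that you rely on precisely the Newton polygon machinery the paper is deliberately avoiding: Lemma~\ref{L1} is the authors' elementary, self-contained substitute for Dumas (they even re-derive the classical Dumas-type criterion, their Theorem~\ref{th:3}, from it). So your proof is a valid and somewhat shorter alternative with the same endgame, but with Dumas' theorem playing the role of Lemma~\ref{L1}; to make it self-contained in the paper's spirit you would have to carry out the ``re-prove the special case'' step you allude to, which is in effect what Lemma~\ref{L1} accomplishes.
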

To prove Theorems \ref{th:1}-\ref{th:2}, elementary divisibility theory for integers is devised. The cogent techniques involved in the proofs are of independent interest as well. Further, the notations specified below are imperative and shall be used in the sequel.

\noindent \textbf{Notations.}  If $f(x)=f_1(x)f_2(x)$, unless otherwise specified, we write $f=a_0+ a_{1}x+\cdots+a_n x^n\in \Bbb{Z}[x]$; $f_1=b_0+b_1x+\cdots+b_mx^m$ and  $f_2=c_0+c_1x+\cdots+c_{n-m}x^{n-m}$ are non--constant polynomials in $\Bbb{Z}[x]$.
 Define further that
 \begin{equation*}
    b_{m+1}=b_{m+2}=\cdots=b_n=0;~c_{n-m+1}=c_{n-m+2}=\cdots=c_{n}=0,
 \end{equation*}
 so that we may write
\begin{equation}\label{ee}
 a_t=b_0 c_{t}+b_1 c_{t-1}+\cdots+b_tc_0,~\text{for each}~t=0,1,\ldots,n.
\end{equation}
\section{Proofs of Theorems \ref{th:1}-\ref{th:2}}
To prove Theorems \ref{th:1}-\ref{th:2}, we first prove the following crucial result.
\begin{lemma}\label{L1}
    Let $f=a_0+ a_{1}x+\cdots+a_n x^n$, $f_1=b_0+b_1x+\cdots+b_mx^m$, and  $f_2=c_0+c_1x+\cdots+c_{n-m}x^{n-m}$ be non--constant polynomials in $\Bbb{Z}[x]$ such that $f(x)=f_1(x)f_2(x)$. Suppose that there is a prime number $p$ and positive integers $k\geq 2$ and $j\leq n$ such that $p^k\mid \gcd(a_0,a_1,\ldots,a_{j-1})$, $p^{k+1}\nmid a_0$, and $\gcd(k,j)=1$. If $p\mid b_0$ and $p\mid c_0$, then $p\mid a_j$.
 \end{lemma}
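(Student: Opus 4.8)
The plan is to argue by contradiction, using the $p$-adic valuations of the coefficients and organizing them via the Newton polygon of $f$ at $p$. Write $v$ for the $p$-adic valuation on $\mathbb{Z}$, and set $\alpha=v(b_0)$, $\gamma=v(c_0)$. Since $a_0=b_0c_0$ with $p^k\mid a_0$ and $p^{k+1}\nmid a_0$, we get $\alpha+\gamma=k$; moreover $\alpha,\gamma\ge 1$ because $p\mid b_0$ and $p\mid c_0$ (so in particular $k\ge 2$, consistent with the hypothesis). Suppose, toward a contradiction, that $p\nmid a_j$.

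First I would locate, in each factor, the first coefficient that survives modulo $p$. If $p$ divided every coefficient of $f_1$ then $p\mid f_1\mid f$ would force $p\mid a_j$; hence there is a least index $r\ge 1$ with $p\nmid b_r$, and likewise a least $s\ge 1$ with $p\nmid c_s$. Reducing modulo $p$, the relations $p\mid b_0,\dots,b_{r-1}$ and $p\mid c_0,\dots,c_{s-1}$ give $\overline{f_1}=x^{r}\,\overline{g_1}$ and $\overline{f_2}=x^{s}\,\overline{g_2}$ with $\overline{g_1}(0)=\overline{b_r}\ne 0$ and $\overline{g_2}(0)=\overline{c_s}\ne 0$ in $\mathbb{F}_p[x]$, whence $\overline{f}=x^{r+s}\,\overline{g_1}\,\overline{g_2}$ has lowest term $x^{r+s}$ with nonzero coefficient $\overline{b_r}\,\overline{c_s}$. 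Thus $r+s$ is the least index $t$ with $p\nmid a_t$; but $p^k\mid a_0,\dots,a_{j-1}$ and $p\nmid a_j$ say that index is $j$. Therefore $r+s=j$, and in particular $1\le r\le j-1$.

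For the main step, let $N(f_1)$, $N(f_2)$, and $N(f)$ denote the lower convex hulls of the point sets $\{(t,v(b_t))\}$, $\{(t,v(c_t))\}$, and $\{(t,v(a_t))\}$ respectively. Since $v(b_r)=0$ is the smallest value possible and $N(f_1)$ lies below all its defining points and above the horizontal axis on $[0,m]$, it passes through $(0,\alpha)$ and $(r,0)$; hence its negative-slope part has vertical drop exactly $\alpha$ and horizontal length at most $r$, and symmetrically that of $N(f_2)$ has drop $\gamma$ and length at most $s$. By the additivity of Newton polygons under multiplication, $N(f)$ is the merge of the edges of $N(f_1)$ and $N(f_2)$ in order of increasing slope, so the negative-slope part of $N(f)$ has drop $\alpha+\gamma=k$ and horizontal length at most $r+s=j$. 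On the other hand, from $p^k\mid a_0,\dots,a_{j-1}$ and $p\nmid a_j$ one checks directly that the segment from $(0,k)$ to $(j,0)$ lies weakly below every point $(t,v(a_t))$ and is therefore the leftmost edge of $N(f)$: a single segment of slope $-k/j$, drop $k$, and length $j$. Matching the two descriptions forces the negative parts of $N(f_1)$ and $N(f_2)$ to assemble into a single segment; as a convex chain with edges of just one slope is a single edge, each of $N(f_1),N(f_2)$ then has a single negative edge of that common slope, necessarily of horizontal lengths $r$ and $s$. The edge of $N(f_1)$ thus runs from $(0,\alpha)$ to $(r,0)$ with slope $-\alpha/r=-k/j$, so $\alpha j=kr$; since $\gcd(k,j)=1$ this gives $j\mid r$, impossible for $1\le r\le j-1$. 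This contradiction proves $p\mid a_j$.

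The genuine obstacle is the claim just used that, on the relevant range, each factor's Newton polygon is a single segment of slope $-k/j$ — equivalently, the elementary inequalities $v(b_t)\ge k(r-t)/j$ for $0\le t\le r$ and $v(c_t)\ge k(s-t)/j$ for $0\le t\le s$. I would prove these two families jointly by induction through the convolution identities \eqref{ee} and the divisibilities $p^k\mid a_t$ for $t<j$; the subtle point is that their $t=0$ instances are not free hypotheses but must be extracted from the summed bound $v(b_0)+v(c_0)\ge k$ being forced to an equality, and this is presumably where the paper's ``elementary divisibility theory'' is expended. Everything else — the existence of $r$ and $s$, the identity $r+s=j$, and the closing number-theoretic contradiction from $\gcd(k,j)=1$ — is routine.
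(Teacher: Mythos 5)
Your proof is correct, but it takes a genuinely different route from the paper. You argue by contradiction: assuming $p\nmid a_j$, you locate the first coefficients $b_r$, $c_s$ of the factors not divisible by $p$, obtain $r+s=j$ by reduction mod $p$, and then invoke Dumas's additivity of Newton polygons to conclude that the leftmost edge of the polygon of $f$ is the single segment from $(0,k)$ to $(j,0)$ of slope $-k/j$, whence each factor's negative-slope part is a single edge of that slope, forcing $kr=j\,v_p(b_0)$ and hence $j\mid r$, impossible since $1\le r\le j-1$; all the geometric steps (the polygon of $f_1$ passing through $(0,\alpha)$ and $(r,0)$, the leftmost edge of $N(f)$, the matching of drops and lengths) check out. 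The paper instead proves Lemma \ref{L1} by a self-contained induction on the $p$-adic valuations of the $b_i$ and $c_i$ through the convolution identities \eqref{ee}, with cases $v_p(b_0)<v_p(c_0)$ and $v_p(b_0)=v_p(c_0)$ and a subcase analysis on the first $b_i$ prime to $p$ --- essentially the elementary argument your last paragraph sketches but does not execute. Your route is shorter but imports Dumas's theorem, which is precisely the machinery the paper is trying to avoid (it later rederives the Dumas-type criterion, Theorem \ref{th:3}, from Lemma \ref{L1} as an alternative to Newton polygons); the paper's route is longer but elementary. Note also that the ``genuine obstacle'' you flag at the end is not an obstacle in your own argument: the single-segment claim already follows from Dumas plus your matching step, so the hedge is needed only if you insist on avoiding Newton polygons, in which case the inductive proof you merely outline is, in substance, the paper's proof.
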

\textbf{Proof of Lemma \ref{L1}.} In view of the hypothesis that $p\mid b_0$ and $p\mid c_0$, there exists a positive integer $\ell\leq k-\ell$ such that $p^\ell\mid b_0$ and $p^{k-\ell}\mid c_0$, where $\ell$ and $k-\ell$ are highest powers of $p$ dividing $b_0$ and $c_0$ respectively.
  To proceed we define the nonnegative integer $\kappa$ such that $\kappa=(j-2)/2$ if $j$ is even and $\kappa=(j-1)/2$ if $j$ is odd.
   We now arrive at the following cases:

   \textbf{Case I:} $\ell<k-\ell$. In this case we have the following subcases:

   \textbf{Subcase I:} $p\mid b_i$ for all $i=0,\ldots, \kappa$.
   Using the expressions for $a_i$ and $a_{2i}$ successively for each $i=0, \ldots, \kappa$, we find that $p$ divides $c_0$, $c_1$, $\ldots$, $c_{\kappa}$. If $\alpha_i$ and $\beta_i$ are the highest powers of $p$ dividing $b_i$ and $c_i$ respectively, then  $\alpha_0=\ell$ and $\beta_0=k-\ell$. We claim that $\alpha_i\geq \ell$ and $\beta_i\geq k-\ell$ for all $i\leq \kappa$. For proof, we consider $a_1=b_0c_1+b_1c_0$ which tells us that  $$\ell+\beta_1\geq k,~\beta_1\leq  k-2\ell+\alpha_1,$$ which further give $\alpha_1\geq\ell$ and $\beta_1\geq k-\ell$ with $\alpha_1<\beta_1$ since $\ell<k-\ell$.  Then $p^k\mid (a_2-b_1c_1)=b_0c_2+b_2c_0$ which for the similar reasons shows that $\alpha_2\geq \ell$ and $\beta_2\geq k-\ell$ with $\alpha_2<\beta_2$.
   Continuing in this manner, suppose for some positive integer $i^*<\kappa$ that the following have been proved successively
   \begin{equation}\label{a6}
    \alpha_{i}\geq \ell,~\beta_{i}\geq k-\ell,~\alpha_i<\beta_i,~\text{for each}~i=0,1,\ldots,i^*.
   \end{equation}
Then consider $a_{i^*+1}=b_0c_{i^*+1}+(b_1c_{i^*}+\cdots+b_{i^*}c_1)+b_{i^*+1}c_0$, where from \eqref{a6} we get $p^{\ell}\mid b_i$ and $p^{k-\ell}\mid c_{i^*+1-i}$  for each $i=1,\ldots, i^*$ so that $p^k\mid b_i c_{i^*+1-i}$. Consequently, $p^k\mid (b_1c_{i^*}+\cdots+b_{i^*}c_1)$. Also, by the hypothesis, $p^k\mid a_{i^*+1}$.  So we get $p^{k}\mid (a_{i^*+1}-b_1c_{i^*}-\cdots-b_{i^*}c_1)=b_0c_{i^*+1}+b_{i^*+1}c_0$. This proves that $\alpha_{i^*+1}\geq \ell$ and $\beta_{i^*+1}\geq k-\ell$ with $\alpha_{i^*+1}<\beta_{i^*+1}$ since $\ell<k-\ell$. With this, we conclude that
\begin{equation}\label{e1}
 \alpha_i\geq \ell,~\beta_i\geq k-\ell,~\alpha_i<\beta_i~\text{for all}~i=0,\ldots, \kappa.
\end{equation}
To proceed further, we first assume that $\kappa=(j-2)/2$. Using \eqref{e1} in the expression for $a_{j-1}$ in \eqref{ee}, we have
   \begin{equation*}
   \begin{split}
    p^k\mid &(a_{j-1}-b_0c_{j-1}-\cdots-b_{(j-4)/2}c_{(j+2)/2}-b_{(j+2)/2}c_{(j-4)/2}- \cdots-b_{j-1}c_0)\\&=b_{(j-2)/2}c_{j/2}+b_{j/2}c_{(j-2)/2},
    \end{split}
   \end{equation*}
which shows that $p^{k-2\ell}\mid c_{j/2}$.    Consequently
   \begin{equation*}
   p\mid  \{b_0c_{j}+\cdots+b_{(j-2)/2}c_{(j+2)/2}+b_{j/2}c_{j/2}+b_{(j+2)/2}c_{(j-2)/2}+\cdots+b_{j-1}c_0\}=a_j,
   \end{equation*}
where the equality follows from \eqref{ee}.

For  $\kappa=(j-1)/2$  we have from \eqref{e1} and  \eqref{ee} that
\begin{equation*}
  p^\ell\mid (b_0c_{j}+b_1c_{j-1}+\cdots+b_{(j-1)/2}c_{(j+1)/2}+b_{(j+1)/2}c_{(j-1)/2}+\cdots+b_{j} c_0)=a_j.
\end{equation*}

\textbf{Subcase II:} There is a smallest positive integer $i\leq \kappa$ for which $p\nmid b_i$. From the Subcase I, $p^\ell$ divides each of $b_0$, $\ldots$, $b_{i-1}$ and $p^{k-\ell}$ divides each of $c_0$, $\ldots$, $c_{i-1}$. Let $q_j$ be the positive integer, such that $iq_j \leq j-1<(1+q_j)i$. Let $\beta_s$ denote the highest power of $p$ dividing $c_s$ for $i\leq s\leq j-1$. We will show that $\beta_{ti+r}=k-(t+1)\ell$, for each $t=1,\ldots q_j$ and $r=0,\ldots, i-1$.

To proceed, we first observe from \eqref{ee} that
\begin{equation}\label{a1}
b_0c_t =a_t-\mathcal{C}(c_0,c_1,\ldots,c_{t-1}),
\end{equation}
where $\mathcal{C}(c_0,\ldots, c_{t-1})$ is the integer combination of $c_0,\ldots,c_{t-1}$ which we define as follows:
\begin{equation}\label{a2}
 \mathcal{C}(c_0)=0;~\mathcal{C}(c_0,c_1,\ldots,c_{t-1})=b_{t}c_0+b_{t-1}c_{1}+\cdots+b_{1}c_{t-1} ~\text{for}~ t>1.
\end{equation}
Since $p^{k-\ell}\mid c_t$ for each $t=0,\ldots, i-1$, it follows  from \eqref{a2} that $p^{k-\ell}\mid \mathcal{C}(c_0,\ldots,c_{i-1})$, which in view of \eqref{a1} and the fact that $p^{k}\mid a_i$ gives  $\beta_i=k-2\ell$ since $p\nmid b_i$.  Suppose we have proved successively that $\beta_{i+r}={k-2\ell}$ for $0\leq r<i-1$. Then $p^{k-\ell}\mid (b_{i+r}c_0+\cdots+b_ic_{r})$ and $p^{k-\ell}\mid(b_{i+1}c_{r}+\cdots+b_1c_{i+r})$ so that from \eqref{a2}, we get $p^{k-\ell}\mid \mathcal{C}(c_0,\ldots,c_{i+r})$, which in view of \eqref{a1} gives $p^{k-2\ell}\mid c_{i+r+1}$ or $\beta_{i+r+1}\geq k-2\ell$.  Since $p\nmid b_i$, we must also have $\beta_{i+r+1}\leq k-2\ell$. So, $\beta_{i+r+1}=k-2\ell$. This proves the claim for $t=1$ and all $r=0,\ldots,i-1$.

Now suppose that $\beta_{ti+r}=k-(t+1)\ell$ for each $t=0,\ldots, t^*$ and $r=0,\ldots,i-1$ for some positive integer $t^*\leq q_j$.
Then we have
\begin{equation}\label{e3}
\begin{split}
 \alpha_s=\alpha_0;~\beta_{ti+s}&=k-(t+1)\ell ~~\text{for}~~ s=0,\ldots, i-1;~t=0,\ldots, t^*.
 \end{split}
\end{equation}
For convenience, we define
\begin{equation}\label{e4}
  h(s)=b_sc_{i(1+t^*)+r-s},~s=0,\ldots, i(1+t^*)+r.
\end{equation}
From \eqref{e3}--\eqref{e4}, we have for $r=0$  and each $s=0,\ldots,i-1$
\begin{equation}\label{a3}
p^{\ell+k-(1+t^*)\ell}\mid h(s);~p^{k-(1+t^*)\ell}\mid h(i+s);~p^{k-t^*\ell}\mid h(2i+s);~\ldots; p^{k-\ell}\mid h(i(1+t^*)+s),
\end{equation} Also, from \eqref{a2} and \eqref{e4} we have
\begin{eqnarray}\label{a4}
\nonumber  \mathcal{C}(c_0,\ldots,c_{i(1+t^*)+r-1})
&=&\sum_{s=1}^{i-1}h(s)+\sum_{s=i}^{2i-1}h(s)+\cdots+\sum_{s=it^*}^{i(1+t^*)-1}h(s)+\sum_{s=i(1+t^*)}^{i(1+t^*)+r}h(s)\\
\nonumber  &=& \sum_{s=1}^{i-1}h(s)+\sum_{s=0}^{i-1}\{h(i+s)+\cdots+h(it^*+s)\}+\sum_{s=0}^{r}h(i(1+t^*)+s)\\
  &=& \sum_{s=1}^{i-1}h(s)+\sum_{s'=1}^{t^*}\sum_{s=0}^{i-1}h(is'+s)+\sum_{s=0}^{r}h(i(1+t^*)+s).
\end{eqnarray}
Using \eqref{a3} in \eqref{a4} for $r=0$, we get $p^{k-(1+t^*)\ell}\mid \mathcal{C}(c_0,\ldots, c_{i(1+t^*)-1})$. Consequently, from \eqref{a1}, we have $p^{k-(1+t^*)\ell}\mid (a_{i(1+t^*)}-\mathcal{C}(c_0,\ldots, c_{i(1+t^*)-1}))=b_0c_{i(1+t^*)}$. This
further gives $p^{k-(2+t^*)\ell}\mid c_{i(1+t^*)}$. Thus,
\begin{equation}\label{a5}
 \beta_{i(1+t^*)+r}=k-(2+t^*)\ell>0
\end{equation}
holds for $r=0$. In view of \eqref{a5}, the assertion in \eqref{a3} holds for $r=1$, using which further in \eqref{a4} proves \eqref{a5} for $r=1$. Suppose then that  \eqref{a5} holds for each $r=0,\ldots,r^*$ for some positive integer $r^*<i-1$. Then in view of \eqref{a5} we have that \eqref{a3} holds for $r=r^*$. Using this further in  \eqref{a4} proves that \eqref{a5} holds for $r=r^*+1$. This proves the claim. So, $p^{k-(1+q_j)\ell}\mid c_s$, where $k>(1+q_j)\ell$ for all $s=0,\ldots,j-1$ which in view of
\eqref{ee} proves
\begin{equation*}
  p^{k-(1+q_j)\ell}\mid (b_0c_{j}+b_1c_{j-1}+\ldots+b_ic_{j-i}+\cdots+ b_jc_0)=a_j.
\end{equation*}

\textbf{Case II:} $\ell=k-\ell$. Here $k$ is even. Then  $j$ is odd since $\gcd(k,j)=1$. In this case, we use the fact that for any two integers $a$ and $b$, and prime $p$, if $p\mid (a+b)$ and $p\mid ab$, then $p\mid a$ and $p\mid b$.

In view of the above fact, we have from the expressions for $a_1$ and $a_2$ in \eqref{ee}  that $p\mid b_1$ and $p\mid c_1$. Similarly from the expressions for $a_2$ and $a_4$ in \eqref{ee}  we get $p\mid b_2$ and $p\mid c_2$. Continuing this way, having proved that $p$ divides each of the integers $b_0$, $c_0$, $b_1$, $c_1$, $\ldots$, $b_{{(j-3)}/{2}}$, $c_{{(j-3)}/{2}}$, it follows from the expressions for $a_{(j-1)/2}$ and $a_{j-1}$ in \eqref{ee}  that $p\mid b_{(j-1)/2}$ and $p\mid c_{(j-1)/2}$. So in view of \eqref{ee}, we get the following:
    \begin{equation*}
      p\mid (b_0c_{j}+\cdots+b_{{(j-1)}/{2}}c_{{(j+1)}/{2}}+b_{{(j+1)}/{2}}c_{{(j-1)}/{2}}+\cdots+b_{j} c_0)=a_j.
    \end{equation*}
 This completes the proof of Lemma \ref{L1}.   \qed
\begin{remark}
   Proof of  Lemma \ref{L1} becomes considerably short if one assumes $\gcd(k,j!)=1$. In that case, the condition $\gcd(k,j!)=1$ implies $k>j$ and $k-t\ell>0$ for each $t=1,\ldots,j$. Consequently in view of \eqref{ee}, one immediately finds  recursively that
\begin{equation}\label{t1}
  p^{k-(t-1)\ell}\mid (a_{t-1}-b_1c_{t-2}-b_2c_{t-3}-\cdots-b_{t-1}c_0)=b_0c_{t-1},~t=1,\ldots,j.
\end{equation}
So from \eqref{t1} it follows that $p\mid c_t$ for each $t=0,\ldots,j-1$ which in view of \eqref{ee} and the fact that $p\mid b_0$ yields the desired conclusion $p\mid (b_0c_j+b_1c_{j-1}+\cdots+b_jc_0)=a_j$.
\end{remark}
\textbf{Proof of Theorem \ref{th:1}.} %For $k=1$ the irreducibility of $f$  follows from Theorem \ref{th:0}. So let $k>1$.
If possible, assume that $f(x)=f_1(x)f_2(x)$ where $f_1$ and $f_2$ are as in the notation.  Then in view of \eqref{e1}, we have
\begin{equation}\label{eee}
a_0=b_0c_0=\pm p^k d;~a_m=b_{m}c_{n-m}.
\end{equation}
Since each zero $\theta$ of $f$ satisfies $|\theta|>d$, we must have $|b_0/b_m|>d$ and $|c_0/c_{n-m}|>d$ which further give $|b_0|>d$ and $|c_0|>d$.

If $p\nmid c_0$, then $p^k\mid b_0$ and consequently the second equality in \eqref{eee} yields $|c_0|<d$, a contradiction. On the other hand if  $p\mid b_0$ and $p\mid c_0$ then $k>1$ which in view of Lemma \ref{L1} gives the desired contradiction $p\mid a_j$. \qed

\textbf{Proof of Theorem \ref{th:2}.} Suppose to the contrary that $f(x)=f_1(x)f_2(x)$ where $f_1$ and $f_2$ are as in the notation. Then $b_0c_0=a_0$ and $b_mc_{n-m}=a_n=\pm p^k d$. Since each zero $\theta$ of $f$ satisfies $|\theta|>d$, we must have $|b_0/b_m|>d$ and $|c_0/c_{n-m}|>d$.
    If $p\nmid b_m$ then $p^k\mid c_{n-m}$ so that $|b_m|\leq d$ and  we have
    \begin{equation*}
\bigl|{a_0}/{a_n}\bigr|=|b_0/b_m|\times \bigl|{c_0}/{c_{n-m}}\bigr|>|b_0/d|d=|b_0|\geq q,
\end{equation*}
which contradicts the hypothesis.

On the other hand  if $p\mid b_m$ and  $p\mid c_{n-m}$, then $k\geq 2$ which on using Lemma \ref{L1} yield the desired contradiction $p\mid a_{n-j}$. \qed
\begin{remark}
In view of Theorems \ref{th:1}-\ref{th:2}, the hypothesis on zeros of $f$ is not required in the case when $j=n$, wherein the hypothesis on $a_0$ is also not required in Theorem \ref{th:2} and we then have:
\begin{thmx}\label{th:3}
   Let $f=a_0+ a_{1}x+\cdots+a_n x^n\in \Bbb{Z}[x]$ be a primitive polynomial. For a prime $p$ and positive integers $k$ and $n$, if $\gcd(k,n)=1$, $p^k\mid \gcd(a_0,a_1,\ldots,a_{n-1})$, $p\nmid a_{n}$, and $p^{k+1}\nmid a_0$, then $f$ is irreducible in $\Bbb{Z}[x]$.
\end{thmx}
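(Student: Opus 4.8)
The plan is to adapt the proof of Theorem~\ref{th:1} to the case $j=n$, the point being that exactly one step of that proof -- the use of the hypothesis on the zeros of $f$ -- can, when $j=n$, be replaced by a direct divisibility argument, so that the zero hypothesis becomes unnecessary. A primitive linear polynomial is irreducible, so I may assume $n\geq 2$ and suppose, for a contradiction, that $f=f_1f_2$ with $f_1,f_2$ non--constant in $\mathbb{Z}[x]$, written as in the Notations; then $m\geq 1$, $n-m\geq 1$, \eqref{ee} holds, $a_0=b_0c_0$, and $a_n=b_mc_{n-m}$. Since $p$ is prime and $p\nmid a_n$, we get $p\nmid b_m$ and $p\nmid c_{n-m}$. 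I would then split into cases according to whether $p$ divides $b_0$ and $c_0$, exactly as in the proof of Theorem~\ref{th:1}.

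Suppose first that $p\nmid c_0$. Let $r$ be the least index with $p\nmid b_r$; it exists because $p\nmid b_m$, and $r\leq m\leq n-1$. In the expansion $a_r=\sum_{i=0}^{r}b_ic_{r-i}$ provided by \eqref{ee}, every summand with $i<r$ is divisible by $p$, so $a_r\equiv b_rc_0\not\equiv 0\pmod p$. But $r\leq n-1$ forces $p^k\mid a_r$, hence $p\mid a_r$ -- a contradiction. The case $p\nmid b_0$ is symmetric, using the least index $s$ with $p\nmid c_s$, for which $s\leq n-m\leq n-1$. This pair of cases is exactly what lets us dispense, when $j=n$, with the inequalities $|b_0|>d$, $|c_0|>d$ used in the proof of Theorem~\ref{th:1}: they are no longer needed because the hypothesis with $j=n$ makes each of $a_0,\ldots,a_{n-1}$ divisible by $p$.

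It remains to treat the case $p\mid b_0$ and $p\mid c_0$. Comparing $p$--adic valuations in $a_0=b_0c_0$ and using $p^k\mid a_0$ with $p^{k+1}\nmid a_0$ gives $v_p(b_0)+v_p(c_0)=k$ with both summands at least $1$, so $k\geq 2$. Lemma~\ref{L1} then applies with $j=n$: the conditions it requires -- $k\geq 2$, $j\leq n$, $p^k\mid\gcd(a_0,\ldots,a_{n-1})$, $p^{k+1}\nmid a_0$, $\gcd(k,n)=1$, $p\mid b_0$, $p\mid c_0$ -- are precisely the hypotheses of the theorem together with what we have just derived, and the lemma yields $p\mid a_n$, contradicting $p\nmid a_n$. (One may even omit the remark that $k\geq 2$, since $k=1$ already makes $v_p(b_0)+v_p(c_0)=1$ impossible.) As every case is contradictory, $f$ has no such factorization and, being primitive of positive degree, is irreducible in $\mathbb{Z}[x]$. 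The one step I expect to need genuine care is the second paragraph -- showing one may assume $p\mid b_0$ and $p\mid c_0$ -- which is where the role formerly played by the location of the zeros is taken over by the $p$--adic information on the coefficients; the rest is set-up and a short valuation count followed by a direct appeal to Lemma~\ref{L1}.
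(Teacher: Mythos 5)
Your proof is correct and follows essentially the same route as the paper: the paper's proof also reduces, via the least index $t$ with $p\nmid b_t$ and the relation \eqref{ee}, to showing $p\mid b_0$ and $p\mid c_0$, and then invokes Lemma~\ref{L1} with $j=n$ to contradict $p\nmid a_n$. Your only deviations are organizational (running the least-index computation as a contradiction under $p\nmid c_0$ rather than as a derivation of $p\mid c_0$) plus the explicit, and welcome, checks that $n\geq 2$ may be assumed and that $k\geq 2$ holds before applying the lemma.
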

Theorem \ref{th:3} is well known and is generally proved using Newton polygons (see \cite{D}). However here, we provide an alternative proof  based on Lemma \ref{L1}.

\textbf{Proof of Theorem \ref{th:3}.}
To the contrary assume that $f(x)=f_1(x)f_2(x)$ where $f_1$ and $f_2$ are as in the notation. In view of Lemma \ref{L1}, it is enough to show that $p\mid b_0$ and $p\mid c_0$ in order to get the desired contradiction. Since $p\mid a_0=b_0c_0$, we may assume without loss of generality that $p\mid b_0$. Since $p\nmid a_n=b_mc_{n-m}$, we have $p\nmid b_m$ and $p\nmid c_{n-m}$.   So, there exists a least positive integer $t\leq m$ such that $p\nmid b_t$. This in view of \eqref{ee} yields the following:
    \begin{equation*}
   p\mid (a_t-b_0c_t-b_1c_{t-1}-\cdots-b_{t-1}c_1)=b_tc_0,
\end{equation*}
so that $p\mid b_t c_0$, which further gives $p\mid c_0$. \qed
\end{remark}
\section{Examples}
 \textbf{1.} For a prime $p$, positive integers $n$ and $k$ with $\gcd(k,j)=1$, consider the polynomial
\begin{equation}\label{ex1}
X_{j,k}= p^{k+1}(1+x+x^2+\cdots+x^{j-1})+(p^k-1) x^{j}+p^{k-1}x^{j+1}(1+x+\cdots+x^{n-j-1}).
\end{equation}
We will show that each zero $\zeta$ of $X_{j,k}$ satisfies $|\zeta|>1$. Observe that
\begin{equation}\label{ex11}
(x-1)X_{j,k}=-p^{k+1}+(p^{k+1}-p^k+1)x^j+(p^{k}-p^{k-1}-1)x^{j+1}+p^{k-1}x^{n+1}.
\end{equation}
so that the coefficients of $x^j$, $x^{j+1}$, and $x^{n+1}$ in $(x-1)X_{j,k}$ are all positive. If $|\zeta|<1$
then from \eqref{ex11} we have
\begin{equation}\label{ex12}
\begin{split}
p^{k+1}&=(p^{k+1}-p^k+1)\zeta^j+(p^{k}-p^{k-1}-1)\zeta^{j+1}+p^{k-1}\zeta^{n+1}\\
&\leq (p^{k+1}-p^k+1)|\zeta|^j+(p^{k}-p^{k-1}-1)|\zeta|^{j+1}+p^{k-1}|\zeta|^{n+1}\\
&<(p^{k+1}-p^k+1)+(p^{k}-p^{k-1}-1)+p^{k-1}=p^{k+1},
\end{split}
\end{equation}
which is absurd. So we must have $|\zeta|\geq 1$.

If $|\zeta|=1$ for some zero $\zeta$ of $X_{j,k}$, then $\zeta=e^{\iota t}$ for some real number $t$. Now from \eqref{ex12},
$(p^{k+1}-p^k+1)(1-e^{jt})+(p^{k}-p^{k-1}-1)(1-e^{(j+1)t})+p^{k-1}(1-e^{(n+1)t})=0,$ which on comparing real parts gives
$$(p^{k+1}-p^k+1)\sin^2\{(jt/2)\}+(p^{k}-p^{k-1}-1)\sin^2\{(j+1)t/2\}+p^{k-1}\sin^2\{(n+1)t/2\}=0$$ which is possible only if $jt,(j+1)t,(n+1)t\in 2\pi\Bbb{Z}$. Thus we have $\zeta^j=\zeta^{j+1}=\zeta^{n+1}=1$, which give $\zeta=1$. But from \eqref{ex1}, $X_{j,k}(1)>0$ which again leads to a contradiction.
We conclude that each zero $\zeta$ of $X_{j,k}$ satisfies $|\zeta|>1$.

Clearly $X_{j,k}$ satisfies rest of the hypotheses of Theorem \ref{th:1}.
So $X_{j,k}$ is irreducible in $\Bbb{Z}[x]$.

 \textbf{2.} For a prime $p$, positive integers $k$, $n$, $m<p$, and $j\leq n$ with $\gcd(k,j)=1$, the polynomial
\begin{equation*}
Y_{j,k,m}= p^{k}(n+x+x^2+\cdots+x^{n-j-1})+m x^{n-j}+p^kx^{n-j+1}(1+\cdots+x^{j-1})
\end{equation*}
satisfies the hypotheses of Theorem \ref{th:2}. So $Y_{j,k,m}$ is irreducible in $\Bbb{Z}[x]$.

 \textbf{3.} Let $d$ be a positive integer and $f= a_0+a_1x+\cdots+a_n x^n\in \Bbb{Z}[x]$ such that
\begin{equation*}
 |a_0|>|a_1|d+|a_2|d^2+\cdots+|a_n|d^n.
\end{equation*}
Then for $|x|\leq d$, we have
\begin{eqnarray*}
% \nonumber to remove numbering (before each equation)
 |f(x)|&\geq & |a_0|-|a_1||x|-\cdots-|a_n| |x|^n> |a_0|-|a_1|d-|a_2|d^2-\cdots-|a_n| d^n>0,
\end{eqnarray*}
which shows that each zero $\theta$ of $f$ satisfies $|\theta|>d$. Now imposing the conditions of Theorem \ref{th:1} or Theorem \ref{th:2} on $f$,  the irreducibility of $f$ in $\Bbb{Z}[x]$ is immediate.
%\begin{equation*}
%Z_{j,k,d}= p^kd^{n+2}+p (x+x^2+\cdots+x^{n-j-1})+x^{n-j}+p^kx^{n-j+1}(1+x+\cdots+x^{n-1})+p^k d x^n.
%\end{equation*}
%If $|x|\leq d$ then
%\begin{eqnarray*}
%% \nonumber to remove numbering (before each equation)
% |Z_{j,k,d}(x)|&\geq & p^kd^{n+2}-p (|x|+|x|^2+\cdots+|x|^{n-j-1})-|x|^{n-j}-p^k|x|^{n-j+1}(1+|x|+\cdots+|x|^{n-1})+p^k d |x|^n\\
%   &=&
%\end{eqnarray*}
%satisfies the hypotheses of Theorem \ref{th:2}. So $Z_{j,k,m}$ is irreducible in $\Bbb{Z}[x]$.
%\subsection*{Acknowledgements}
%\bibliographystyle{amsplain}

%\begin{affil}
%\end{affil}
\end{document}